\theoremstyle{definition}
\numberwithin{equation}{section}
\newtheorem*{rep@theorem}{\rep@title}
\newcommand{\newreptheorem}[2]{%
\newenvironment{rep#1}[1]{%
 \def\rep@title{#2 \ref{##1}}%
 \begin{rep@theorem}}%
 {\end{rep@theorem}}}
\newtheorem{theorem}{Theorem}[section]
\newtheorem{corollary}[theorem]{Corollary}
\newtheorem{lemma}[theorem]{Lemma}
\newtheorem{proposition}[theorem]{Proposition}
\newtheorem{definition}[theorem]{Definition}
\newtheorem{remark}[theorem]{Remark}
\newtheorem{examples}[theorem]{Examples}
\newtheorem*{conjecture*}{Conjecture}
\newtheorem{conjecture}{Conjecture}[section]
\newcommand{\fai}{\varphi}
\def\A{G}
\begin{document}
\title{The quantum group fixing  a sequence of finite subsets}
\author{Huichi Huang}
\address{Huichi Huang, College of Mathematics and Statistics, Chongqing University, Chongqing, 401331, PR China}
\email{huanghuichi@cqu.edu.cn}
\keywords{Discrete quantum group,  sequence of finite subsets}
\subjclass[2010]{Primary 37A30,37A45, 11B25, 43A05, 43A07, 46L65}
\thanks{The author is partially supported by the Fundamental Research Funds for the Central Universities No. 0208005202045.}
\date{\today}
\begin{abstract}
Motivated by generalizing Szemer\'edi's theorem, we the elements in a discrete quantum group fixing a sequence of finite subsets and prove that the set of these elements  is a quantum subgroup. Using this we obtain a version of mean ergodic theorem for discrete quantum groups.
\end{abstract}

\maketitle
\section{Introduction}

In~\cite{Szemeredi1975}, E. Szemer\'edi proved the following theorem conjectured by P. Erd\"os and P. Tur\'an~\cite{ET1936}, which generalizes van der Waerden's theorem~\cite{vanderWaerden1927}.

\begin{theorem}~[Szemer\'edi's theorem]\

A set of positive integers with positive upper density contains arbitrarily long arithmetic progressions.
\end{theorem}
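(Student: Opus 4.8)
The plan is to follow Furstenberg's ergodic-theoretic route rather than Szemer\'edi's original graph-theoretic argument, since the rest of this paper is cast in operator-algebraic and ergodic language. The first step is to establish the Furstenberg correspondence principle, which converts the purely combinatorial density hypothesis into a recurrence statement for a measure-preserving system. Concretely, given a set $E \subseteq \mathbb{Z}$ of positive upper density, I would build a measure-preserving system $(X, \mathcal{B}, \mu, T)$ together with a measurable set $A \in \mathcal{B}$ satisfying $\mu(A) = d^{*}(E) > 0$ (its upper Banach density, which dominates the upper density) and
\[
\overline{d}\big(E \cap (E-n_1) \cap \cdots \cap (E-n_{k-1})\big) \geq \mu\big(A \cap T^{-n_1}A \cap \cdots \cap T^{-n_{k-1}}A\big)
\]
for all shifts $n_1, \dots, n_{k-1}$. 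The system is obtained by taking $X = \{0,1\}^{\mathbb{Z}}$ with the shift $T$, embedding the indicator sequence of $E$ as a point $x \in X$, and letting $\mu$ be a weak-$*$ limit of empirical averages of $\delta$-masses along the orbit of $x$ over an interval sequence realizing the density; here $A$ is the cylinder set $\{y : y_0 = 1\}$.

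Once the correspondence is in place, the theorem reduces to \emph{Furstenberg's multiple recurrence theorem}: for every measure-preserving system, every $A$ with $\mu(A) > 0$, and every $k \geq 1$ there is some $n \geq 1$ with
\[
\mu\big(A \cap T^{-n}A \cap T^{-2n}A \cap \cdots \cap T^{-(k-1)n}A\big) > 0,
\]
which upon taking $n_j = jn$ forces an arithmetic progression of length $k$ inside $E$. I would prove this recurrence statement by the structural dichotomy at the heart of Furstenberg's method. Call a factor \emph{SZ} if it satisfies the conclusion above for all $A$ of positive measure and all $k$; the goal is to show that, starting from the trivial one-point factor (which is visibly SZ), the whole system is SZ, by a transfinite induction along the Furstenberg--Zimmer tower.

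The main obstacle, and the technical core, is establishing that the SZ property passes through the two kinds of extensions appearing in that tower. For a \emph{weakly mixing} extension one shows that the multiple ergodic averages $\frac{1}{N}\sum_{n=1}^{N} f_1(T^n x) f_2(T^{2n}x)\cdots f_{k-1}(T^{(k-1)n}x)$ converge in $L^2$ to the product of the integrals, so that positivity of $\mu(A)$ alone forces recurrence; this rests on a van der Corput inequality together with an induction on $k$. For a \emph{compact} (isometric) extension one exploits almost periodicity of the relevant functions over the base to produce syndetic sets of near-return times, and then runs a van der Waerden--type colouring argument fibrewise. The Furstenberg--Zimmer structure theorem guarantees that every system is an inverse limit of a (possibly transfinite) tower built by alternately taking compact and weakly mixing extensions, and one checks separately that SZ is preserved under such inverse limits.

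Assembling these pieces---the correspondence principle, the two extension lemmas, preservation under inverse limits, and the structure theorem---yields Szemer\'edi's theorem. I expect the compact-extension step to carry the bulk of the difficulty, since it is there that the genuinely multilinear, higher-order recurrence phenomena first appear and must be controlled uniformly over the fibres; the weakly mixing case, by contrast, collapses to a near-independence estimate and is comparatively soft.
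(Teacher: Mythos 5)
Your outline is the standard Furstenberg ergodic-theoretic proof, and as a roadmap it is correct; but it is worth noting that the paper itself offers no proof of this statement at all --- Szemer\'edi's theorem is quoted as a classical result with a citation to Szemer\'edi's combinatorial argument, and the only thing actually proved here is the generalization Theorem~\ref{APDG}. Your first half (the correspondence principle: embed the indicator of $E$ into the shift on $\{0,1\}^{\mathbb{Z}}$, take a weak-$*$ limit of empirical measures along intervals realizing the density, and use closedness of the cylinder set to get $\mu(A)\geq\overline{d}(E)>0$) is exactly the mechanism used in the proof of Theorem~\ref{APDG}, transported there to a general discrete group and a sequence $\Sigma$ fixed by $b$. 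Where you diverge is that the paper then simply invokes Furstenberg's multiple recurrence theorem (Theorem~\ref{thm: CP}, cited from Furstenberg and Furstenberg--Katznelson--Ornstein) as a black box, whereas you propose to prove it via the Furstenberg--Zimmer tower: SZ for weakly mixing extensions by van der Corput, SZ for compact extensions by almost periodicity plus a fibrewise van der Waerden argument, preservation under inverse limits, and the structure theorem. That second half is where essentially all of the difficulty of Szemer\'edi's theorem resides, and in your proposal it is described rather than carried out --- the compact-extension lemma and the structure theorem are each substantial theorems in their own right. So your plan is the correct and standard one, and strictly more ambitious than what the paper does, but as written it is a sketch of the recurrence theorem rather than a proof of it; if the recurrence theorem is granted (as the paper grants it), your argument closes and in fact recovers the stronger Banach-density version.
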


Szemer\'edi's original proof is combinatorial and has merits on its own right~\cite{Szemeredi1969,Szemeredi1975}.  A good survey is ~\cite{Tao2007}.

%The difference between positive upper density and positive upper Banach density cannot be omitted since if one can prove  Erd\"os-Tur\'an conjecture, which says that a subset
%$\Lambda$ satisfying that $\displaystyle \sum_{k\in\Lambda}\frac{1}{k}=\infty$ contains arbitrarily  long arithmetic progressions, then Szemer\'edi's theorem immediately follows. However unlike Szemer\'edi's theorem and the  Green-Tao theorem~\cite[Thm. 1.1 ]{GreenTao2008}, Furstenberg's version of Szemer\'edi's theorem does not follow from trueness of Erd\"os-Tur\'an conjecture, since a set $\Lambda$ with positive upper Banach density does not necessarily satisfy that $\displaystyle \sum_{k\in\Lambda}\frac{1}{k}=\infty$.

%For instance, let $\displaystyle\Lambda=\bigcup_{n=1}^\infty [2^n, 2^n+n)$, then  $\Lambda$ has positive upper Banach
%density but $\displaystyle\sum_{k\in\Lambda}\frac{1}{k}\leq\sum_{n=1}^\infty\frac{n}{2^n}<\infty$. See~\cite [Sec. 3.6]{Furstenberg1981} for a  discussion of comparison between positive upper density and positive upper Banach density.

One may ask the following question:

{\it what's the reason behind the fact that a set with positive upper density contains arbitrarily long arithmetic progressions?}

In this paper, we prove a generalized Szemer\'edi's theorem and give a partial answer to this question.

\begin{theorem}~\label{APDG}
Let  $\Sigma=\{F_n\}$ be a sequence of finite subsets in a discrete group $\Gamma$ and suppose $b$ in $\Gamma$ fixes $\Sigma$ from right~(left). If a subset $\Lambda$ of $\Gamma$ has positive upper density with respect to $\Sigma$, then for any positive integer $k$, there exist $n>0$ and $a\in\Gamma$ such that  $\{b^{jn}a\}_{j=0}^{k-1}$~($\{ab^{jn}\}_{j=0}^{k-1}$) is contained in $\Lambda$ .
\end{theorem}

Here  we say that $b$ in $\Gamma$  {\bf fixes $\Sigma$} if $\displaystyle\lim_{n\to\infty}\frac{|b F_n\Delta F_n|}{|F_n|}=0$~($\displaystyle\lim_{n\to\infty}\frac{|F_n b\Delta F_n|}{|F_n|}=0$).

The {\bf upper density} $\displaystyle\overline{D}_\Sigma(\Lambda)$ of a subset
$\Lambda$ of $\Gamma$ with respect to  $\Sigma$ is defined by
$\displaystyle\limsup_{n\to\infty}\frac{|F_n\cap \Lambda|}{|F_n|}$~\cite{BBF2010}.

It's easy to see that the set $\Gamma_\Sigma$ of elements in $\Gamma$ fixing $\Sigma$ is a subgroup of $\Gamma$.

For discrete quantum groups, one can still define the subset fixing a sequence of finite sets. Moreover we can prove that it is a discrete quantum subgroup.

\begin{theorem}
~\label{thm: gp}
Given a sequence $\Sigma$ of finite subsets of $\widehat{\mathbb{G}}_\Sigma$ for a compact quantum group $\mathbb{G}$, the $C^*$-algebra  $C^*(\widehat{\mathbb{G}}_\Sigma)$ is a compact quantum group.
\end{theorem}

Mean ergodic theorem for amenable discrete quantum group already appears in~\cite{Huang2016-2}. Using the concept defined above,  we prove a mean ergodic theorem in the setting of arbitrary discrete quantum groups.~\footnote{A discrete quantum group is the dual of a compact quantum group, hence we state the result in terms of compact quantum groups.}

\begin{theorem}
~\label{thm: met}
  \begin{enumerate}
    \item[(i)] Suppose $T$ is a  limit  of $\{\frac{1}{|F_n|_w}\sum_{\alpha\in F_n}d_\alpha\pi(\chi(\alpha))\}_{n=1}^\infty$ in $B(H)$. Then $TP=PT=P$ where $P$ is the orthogonal projection from $H$ onto $H_\Sigma:=\{x\in H|\,\pi(\chi(\alpha))x=d_\alpha x\, {\rm for\,all}\,\alpha\in\cup F_n\}$;
    \item[(ii)] If $y$ in $H$ belongs to ${\rm Orb}(x,\widehat{\mathbb{G}}_\Sigma)$, then
    $$\lim_{n\to\infty}\frac{1}{|F_n|_w}\sum_{\alpha\in F_n}d_\alpha\pi(\chi(\alpha))(x-y)=0.$$%$T\pi(\chi(\gamma))=d_\gamma T$ for every $\alpha$ in $\widehat{\mathbb{G}}_\Sigma$.
  \end{enumerate}
\end{theorem}

The article is organized as follows. In section 1, we collect some basic facts about compact quantum groups. In section 2,  we  prove Theorem~\ref{APDG}, which uses the concept of the subgroup fixing a sequence of finite subsets in a discrete group. In the remaining sections, we turn to quantum groups. In section 3, we prove  Theorem~\ref{thm: gp} which says that in a discrete quantum group, the subset fixing a sequence of finite subsets is a quantum subgroup.  Then in section 4, we define the orbit of a vector in a Hilbert space under an action of discrete quantum group and lay down some basic properties. Then we  prove a generalized mean ergodic Theorem~\ref{thm: met}.

\section{Preliminaries}\

\subsection{Conventions}\

Within this paper, we use $B(H,K)$ to denote the space of bounded linear operators from a Hilbert space $H$ to another Hilbert space $K$, and $B(H)$ stands for $B(H,H)$.

A net  $\{T_\lambda\}\subseteq B(H)$ converges to $T\in B(H)$ under strong operator topology~(SOT) if $T_\lambda x\to Tx$ for every $x\in H$, and  $\{T_\lambda\}$  converges to $T\in B(H)$ under weak operator topology~(WOT) if
$\langle T_\lambda x,y\rangle\to \langle Tx,y\rangle$ for all $x,y\in H$.

The notation $A \otimes B$ always means the minimal tensor product of two $C^*$-algebras $A$ and $B$.

For a state $\fai$ on a unital $C^*$-algebra $A$, we use $L^2(A,\fai)$ to denote the Hilbert space of GNS representation of $A$ with respect to $\fai$. The image of  an $a\in A$ in $L^2(A,\fai)$ is denoted by $\hat{a}$.

In this paper all $C^*$-algebras are assumed to be unital and separable.

\subsection{Some  Facts about Compact Quantum Groups}\

In this paper, we consider a discrete quantum group, which can be thought of as the dual of a compact quantum group. Compact quantum groups are  noncommutative analogues of compact groups~\cite{Woronowicz1987,BaajSkandalis1993,Woronowicz1998}.

\begin{definition}
A \textbf{compact quantum group} is a pair $(A,\Delta)$ consisting of a unital $C^*$-algebra $A$ and a unital $*$-homomorphism $$\Delta: A\rightarrow A\otimes A$$ such that
\begin{enumerate}
\item $(id\otimes\Delta)\Delta=(\Delta\otimes id)\Delta$.
\item $\Delta(A)(1\otimes A)$ and $\Delta(A)(A\otimes 1)$ are dense in $A\otimes A$.
\end{enumerate}
The $*$-homomorphism $\Delta$ is called the \textbf{coproduct} of $\A$.
\end{definition}

One may think of $A$ as $C(\mathbb{G})$, the $C^*$-algebra of continuous functions on a compact quantum space $\mathbb{G}$ with a quantum group structure. In the rest of the paper we write  a compact quantum group $(A,\Delta)$ as $\mathbb{G}$.

There exists a unique state $h$ on $A$ such that $$(h\otimes id)\Delta(a)=(id\otimes h)\Delta(a)=h(a)1_A$$
 for all $a$ in $A$. The state $h$ is called the \textbf{Haar measure} of $\mathbb{G}$. Throughout this paper, we use $h$ to denote it.

For a compact quantum group $\mathbb{G}$, there is a unique dense unital $*$-subalgebra $\mathcal{A}$ of $A$ such that
\begin{enumerate}
\item  $\Delta$  maps from $\mathcal{A}$ to $\mathcal{A}\odot \mathcal{A}$~(algebraic tensor product).
\item There exists a unique multiplicative linear functional $\varepsilon : \mathcal{A} \to \mathbb{C}$ and a linear map $\kappa : \mathcal{A} \to \mathcal{A}$ such that
$(\varepsilon \otimes id)\Delta(a) = (id \otimes \varepsilon) \Delta(a) = a$ and $m(\kappa \otimes id) \Delta(a) = m(id \otimes \kappa)\Delta(a) = \varepsilon(a)1$
for all $a \in \mathcal{A}$, where $m:\mathcal{A} \odot \mathcal{A}\to \mathcal{A}$ is the multiplication map. The functional $\varepsilon$ is called \textbf{counit} and $\kappa$ the \textbf{coinverse} of $C(\mathbb{G})$.
\end{enumerate}

Note that $\varepsilon$ is only densely defined and not necessarily bounded. If $\varepsilon$ is bounded and $h$ is faithful~($h(a^*a)=0$ implies $a=0$), then $\mathbb{G}$ is called \textbf{coamenable}~\cite{BMT2001}. Examples of coamenable compact quantum groups include $C(\mathbb{G})$ for a compact group $\mathbb{G}$ and $C^*(\Gamma)$ for a  discrete amenable group \,$\Gamma$.

A nondegenerate (unitary) \textbf{representation} $U$ of  a compact quantum group $\mathbb{G}$ is an invertible~(unitary) element in $M(K(H)\otimes A)$ for some Hilbert space $H$ satisfying that $U_{12}U_{13}=(id\otimes \Delta)U$. Here $K(H)$ is the $C^*$-algebra of compact operators on $H$ and  $M(K(H)\otimes A)$ is the multiplier $C^*$-algebra of  $K(H)\otimes A$.

We  write $U_{12}$ and $U_{13}$ respectively for the images of  $U$ by two maps from $M(K(H)\otimes A)$ to $M(K(H)\otimes A\otimes A)$ where the first one is obtained by extending the map $x \mapsto x \otimes 1$ from $K(H) \otimes A$ to $K(H) \otimes A\otimes A$, and the second one is obtained by composing this map with the flip on the  last two factors. The Hilbert space $H$ is called the \textbf{carrier Hilbert space} of $U$. From now on, we always assume representations are nondegenerate. If the carrier Hilbert space $H$ is of finite dimension, then $U$ is called a finite dimensional representation of $\mathbb{G}$.

For two representations $U_1$ and $U_2$ with the carrier Hilbert spaces $H_1$ and $H_2$ respectively, the set of
\textbf{intertwiners}  between $U_1$ and $U_2$, ${\rm Mor}(U_1,U_2)$, is defined by
$${\rm Mor}(U_1,U_2)=\{T\in B(H_1,H_2)|(T\otimes 1)U_1=U_2(T\otimes 1)\}.$$
Two representations $U_1$ and $U_2$ are equivalent if there exists a bijection $T$ in ${\rm Mor}(U_1,U_2)$.
A representation $U$ is called \textbf{irreducible} if ${\rm Mor}(U,U)\cong\mathbb{C}$.

Moreover, we have the following well-established facts about representations of compact quantum groups:
\begin{enumerate}
\item Every finite dimensional representation is equivalent to a unitary representation.
\item Every irreducible representation is  finite dimensional.
\end{enumerate}
Let $\widehat{\mathbb{G}}$ be the set of equivalence classes of irreducible unitary representations of $\mathbb{G}$. For every $\gamma\in \widehat{\mathbb{G}}$, let $U^{\gamma}\in \gamma$  be unitary and $H_{\gamma}$ be its carrier Hilbert space with dimension $d_{\gamma}$. After fixing an orthonormal basis of $H_{\gamma}$, we can write $U^{\gamma}$ as  $(u^{\gamma}_{ij})_{1\leq i,j\leq d_{\gamma}}$ with $u^{\gamma}_{ij}\in A$~($u^{\gamma}_{ij}$'s are called the \textbf{matrix entries} of $\gamma$), and
$$\Delta(u^{\gamma}_{ij})=\sum_{k=1}^{d_\gamma}u^{\gamma}_{ik}\otimes u^{\gamma}_{kj}$$ for all $1\leq i,j\leq d_\gamma$.

The matrix $\overline{U^{\gamma}}$ is still an irreducible representation~(not necessarily unitary) with the carrier Hilbert space $\bar{H}_\gamma$. It is called the \textbf{conjugate} representation of $U^\gamma$ and the equivalence class of $\overline{U^{\gamma}}$ is denoted by $\bar{\gamma}$.

Given two finite dimensional representations  $\alpha,\beta$ of $\mathbb{G}$,  fix orthonormal basises for $\alpha$ and $\beta$ and write $\alpha,\beta$ as $U^\alpha, U^\beta$ in matrix forms respectively. Define the \textbf{ direct sum}, denoted by $\alpha+\beta$ as an equivalence class of unitary representations of dimension $d_\alpha+d_\beta$ given by
$\bigl(\begin{smallmatrix}
U^\alpha&0\\ 0&U^\beta
\end{smallmatrix} \bigr)$, and
the \textbf{tensor product}, denoted by $\alpha\beta$,  is an equivalence class of unitary representations of dimension $d_\alpha d_\beta$ whose matrix form is given by $U^{\alpha\beta}=U^\alpha_{13}U^\beta_{23}$.

The \textbf{character} $\chi(\alpha)$ of a finite dimensional representation $\alpha$ is given by
$$\chi(\alpha)=\sum_{i=1}^{d_\alpha} u^\alpha_{ii}.$$ Note that $\chi(\alpha)$ is independent of choices of representatives of $\alpha$. Also $\|\chi(\alpha)\|\leq d_\alpha$ since $\sum_{k=1}^{d_\alpha} u^\alpha_{ik}(u^\alpha_{ik})^*=1$ for every $1\leq i\leq d_\alpha$. Moreover
$$\chi(\alpha+\beta)=\chi(\alpha)+\chi(\beta),\,\chi(\alpha\beta)=\chi(\alpha)\chi(\beta)\,{\rm and}\,\chi(\alpha)^*=\chi(\bar{\alpha})$$ for finite dimensional representations $\alpha,\beta$.

Every representation of a compact quantum group is a direct sum of irreducible representations. For two finite dimensional representations $\alpha$ and $\beta$, denote the number of copies of $\gamma\in\widehat{\mathbb{G}}$ in the  decomposition of $\alpha\beta$ into sum of irreducible representations by $N_{\alpha,\beta}^\gamma$. Hence
$$\alpha\beta=\sum_{\gamma\in \widehat{\mathbb{G}}}N_{\alpha,\beta}^\gamma\gamma.$$

We have the Frobenius reciprocity law~\cite[Proposition 3.4.]{Woronowicz1987}~\cite[Example 2.3]{Kyed2008}.
$$N_{\alpha,\beta}^\gamma=N_{\gamma,\bar{\beta}}^\alpha=N_{\bar{\alpha},\gamma}^\beta,$$ for all $\alpha,\beta,\gamma\in \widehat{\mathbb{G}}$.

Within the paper, we assume that $A=C(\mathbb{G})$ is a separable $C^*$-algebra, which amounts to say, $\widehat{\mathbb{G}}$ is countable.

\begin{definition}~\cite[Definition 3.2]{Kyed2008}~\label{boundary}\
Given two finite subsets $S, F$ of $\widehat{\mathbb{G}}$, the \textbf{boundary} of $F$ relative to $S$, denoted by $\partial_S(F)$, is defined by
\begin{align*}
\partial_S(F)=&\{\alpha\in F\,|\, \exists\,\gamma\in S,\, \beta\notin F,\,{\rm such\, that}\, N_{\alpha,\gamma}^\beta>0\,\}   \\
&\cup\{\alpha\notin F\,|\, \exists\,\gamma\in S, \,\beta\in F,\,{\rm such\, that}\, N_{\alpha,\gamma}^\beta>0\,\}.
\end{align*}
We denote  $\partial_{\{\gamma\in\widehat{\mathbb{G}}|\gamma \,{\rm is\, contained \, in\, \alpha}\}}(F)$ by $\partial_{\alpha}(F)$ for a finite dimensional representation $\alpha$.
\end{definition}

We say $\gamma$ in $\widehat{\mathbb{G}}$  \textbf{fixes  a sequence} $\Sigma=\{F_n\}$ of finite subsets in $\widehat{\mathbb{G}}$ if
$$\lim_{n\to\infty}\frac{|\partial_\gamma(F_n)|_w}{|F_n|_w}=0.$$

The \textbf{weighted cardinality} $|F|_w$ of a finite subset $F$ of $\widehat{\mathbb{G}}$ is given by
$$|F|_w=\sum_{\alpha\in F} d_\alpha^2.$$

\begin{comment}
D. Kyed proves that a compact quantum group $\mathbb{G}$ is coamenable iff there exists a F{\o}lner sequence in $\widehat{\mathbb{G}}$.

\medskip

\begin{theorem}~[F{\o}lner condition for amenable discrete quantum groups]~\label{Fcondition}~\cite[Corollary 4.10]{Kyed2008}\

A compact quantum group $\mathbb{G}$ is coamenable iff there exists a sequence $\{F_n\}_{n=1}^\infty$~(a right F{\o}lner sequence) of finite subsets of $\widehat{\mathbb{G}}$ such that
$$\lim_{n\to\infty} \dfrac{|\partial_S(F_n)|_w}{|F_n|_w}=0$$ for every finite nonempty subset $S$ of $\widehat{\mathbb{G}}$.
\end{theorem}
\end{comment}

\section{The Case for Groups}\

Let $G$ be a countable discrete group and $\Sigma=\{F_n\}_{n=1}^\infty$ is a sequence of finite subsets of $G$.
\begin{definition}
We say that an element $g$ in $G$  \textbf{fixes  $\Sigma$} if
  $$\lim_{n\to\infty}\frac{|gF_n\Delta F_n|}{|F_n|}=0.$$ Denote by $G_\Sigma$ the set of elements in $G$ fixed by $\Sigma$.
\end{definition}

It's routine to check that $G_\Sigma$ is a subgroup of $G$.

\begin{comment}
\begin{proposition}[Properties of $G_\Sigma$]\

The following are true:
\begin{enumerate}
  \item The set $G_\Sigma$ is a subgroup of $G$;
  \item For every $g$ in $G$, denote the sequence $\{gF_n\}$ by $g\Sigma$. Then $G_{g\Sigma}=g (G_\Sigma) g^{-1}$.
\end{enumerate}

\end{proposition}

\begin{proof}
We first prove that $G_\Sigma$ is a group.
\begin{enumerate}
    \item The neutral element $e_G$ belongs to $G_\Sigma$.
    \item Note that $|gF_n\Delta F_n|=|F_n\Delta g^{-1} F_n|$, so $g^{-1}$ is in $G_\Sigma$ when  $g$ is in $G_\Sigma$.
    \item Suppose $g$ and $h$ are in $G_\Sigma$. Since $gh F_n\Delta F_n\subset (gh F_n\Delta g F_n)\cup (g F_n\Delta F_n)$, we have $gh$ is also in $G_\Sigma$.
\end{enumerate}

Next we prove that $G_{g\Sigma}=g (G_\Sigma) g^{-1}$ for every $g$ in $G$.

Note that $\frac{|hgF_n\Delta gF_n|}{|g F_n|}=\frac{|g^{-1}hgF_n\Delta F_n|}{|F_n|}$ for any $g,h$ in $G$, so $h$ is $G_{g\Sigma}$ iff $g^{-1}hg$ is in $G_\Sigma$.
\end{proof}
\end{comment}

\begin{examples}~[Examples of $G_\Sigma$]\

  \begin{enumerate}
    \item A group $G$ is amenable iff $G_\Sigma=G$ and $\Sigma$ is a F\o lner sequence.
    \item In $\mathbb{Z}$, let $\Sigma=\{F_n\}_{n=1}^\infty$ with $F_n=\{1, 3,\cdots, 2n+1\}$. Then $\mathbb{Z}_\Sigma=2\mathbb{Z}$.
  \end{enumerate}
\end{examples}

\subsection{Arithmetic Progressions in Discrete Groups}\

In 1977, H. Furstenberg found that Szemer\'edi's theorem is equivalent to a multiple recurrence theorem, which he called ``ergodic Szemer\'edi theorem''.  See~\cite[Thm. 1.4]{Furstenberg1977} and ~\cite[Thm. II]{FKO1982}.

Via proving his ergodic Szemer\'edi theorem,   Furstenberg  gave an ergodic theoretic proof of Szemer\'edi's theorem. This is Furstenberg correspondence principle which opens a door for applications of ergodic theory to combinatorial number theory. This is also the main ingredient of the paper.

\begin{theorem}~[Ergodic Szemer\'edi theorem]\
~\label{thm: CP}

Let $(X,\mathcal{B},\nu, T)$ be a dynamical system consisting of a probability measure space $(X,\mathcal{B},\nu)$ and a measure-preserving transformation $T:X\to X$. For any positive integer $k$, there exists $n\in\mathbb{Z}$ such that $$\mu(\bigcap_{j=1}^k T^{-jn}A)>0$$ whenever $\mu(A)>0$.
\end{theorem}
Along this idea, it appear various generalizations  of Szemer\'edi's theorem to $\mathbb{Z}^d$~\cite{FK1978,FK1991,BL1996}.

Actually  via ergodic Szemer\'edi theorem, Furstenberg had proved a theorem stronger than  Szemer\'edi's theorem~\cite[Thm. I]{FKO1982}.
\begin{theorem}~[Furstenberg's version of Szemer\'edi's theorem]\

A set of positive integers with positive upper Banach density contains arbitrarily long arithmetic progressions.
\end{theorem}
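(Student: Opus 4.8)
The plan is to derive this from the Ergodic Szemer\'edi theorem (Theorem~\ref{thm: CP}) through the Furstenberg correspondence principle, which converts the combinatorial hypothesis into a statement about a measure-preserving system. Let $E$ denote the given set, with upper Banach density $d^*(E)>0$. I would first encode $E$ as a point of a compact dynamical system: take $X=\{0,1\}^{\mathbb{Z}}$ with the product topology (compact and metrizable), let $T\colon X\to X$ be the shift $(Tx)(m)=x(m+1)$, and let $\xi\in X$ be the indicator sequence with $\xi(m)=1$ iff $m\in E$. The point of the encoding is that the clopen cylinder $A=\{x\in X : x(0)=1\}$ satisfies $T^m\xi\in A$ iff $m\in E$, so membership in $E$ is read off the orbit of $\xi$ under $T$.

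Next I would build a shift-invariant measure assigning $A$ large mass. By definition of upper Banach density there are intervals $[M_i,M_i+L_i]$ with $L_i\to\infty$ along which $\frac{|E\cap[M_i,M_i+L_i]|}{L_i+1}\to d^*(E)$. Form the orbital averages $\mu_i=\frac{1}{L_i+1}\sum_{m=M_i}^{M_i+L_i}\delta_{T^m\xi}$, and use weak-$*$ compactness of the probability measures on the compact space $X$ to pass to a subsequence with $\mu_i\to\mu$ weak-$*$. Two short checks complete the construction: $\mu$ is $T$-invariant because $T_*\mu_i-\mu_i$ is carried by the two endpoint terms and hence has total variation $O(1/L_i)\to0$; and $\mu(A)=\lim_i\mu_i(A)=d^*(E)>0$, since $1_A$ is continuous and $\mu_i(A)$ is exactly the density of $E$ in the $i$-th interval.

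Now I would feed $(X,\mathcal{B},\mu,T)$ and the positive-measure set $A$ into Theorem~\ref{thm: CP}: for the prescribed $k$ there is a nonzero $n$ with $\mu\big(\bigcap_{j=1}^{k}T^{-jn}A\big)>0$. The intersection $B=\bigcap_{j=1}^{k}T^{-jn}A$ is again clopen, so $\mu(B)=\lim_i\mu_i(B)$; unwinding the definitions gives $T^m\xi\in B$ exactly when $m+n,m+2n,\dots,m+kn$ all lie in $E$. Hence $\mu(B)>0$ forces, for large $i$, at least one such $m$ in $[M_i,M_i+L_i]$, and putting $a=m+n$ produces $a,a+n,\dots,a+(k-1)n\in E$, an arithmetic progression of length $k$.

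The dynamical content is supplied wholesale by Theorem~\ref{thm: CP}, so the real work, and the main obstacle, is the correspondence step itself: constructing the invariant measure $\mu$ and verifying the two relations tying it to the density of $E$. The delicate point is transferring positivity of $\mu(B)$ back to a genuinely nonempty (indeed positive-density) configuration inside $E$, which is precisely where the continuity of $1_B$ and the identification of $\mu(B)$ as a limit of honest finite-interval densities are essential.
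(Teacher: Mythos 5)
Your argument is correct and is the standard Furstenberg correspondence principle: shift space $\{0,1\}^{\mathbb{Z}}$, the cylinder set $\{x: x(0)=1\}$, a weak-$*$ limit of Dirac measures averaged over the intervals witnessing positive upper Banach density, the two checks of invariance and positivity, and then an appeal to Theorem~\ref{thm: CP}. This is essentially the same route the paper takes: it does not reprove this particular statement (it cites Furstenberg--Katznelson--Ornstein), but your proof is exactly the specialization to $\Gamma=\mathbb{Z}$, $F_n=[M_n,M_n+L_n]$, $b=1$ of the paper's proof of its generalization, Theorem~\ref{APDG}.
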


A subset $\Lambda$ of positive integers has {\bf positive upper density} if $\displaystyle\limsup_{n\to\infty} \frac{|\Lambda\cap [1,n]|}{n}>0$ and has {\bf positive upper Banach density} if $\displaystyle\limsup_{n\to\infty} \frac{|\Lambda\cap [a_n, b_n)|}{b_n-a_n}>0$ for a sequence of intervals $\{[a_n, b_n)\}$ with $b_n-a_n\to\infty$.

Suppose $T$ is a homeomorphism on a compact metrizable space $X$. A Borel probability measure $\nu$ on $X$ is called {\bf $T$-invariant} if
$\nu(T^{-1}A)=\nu(A)$ for every Borel subset $A$ of $X$ and every $s$ in $\Gamma$.

It's well-known that $\nu$ is $T$-invariant if and only if $\nu(T^{-1}f)=\nu(f)$ for every $f$ in $C(X)$. Here $C(X)$ stands for the set of complex-valued continuous functions on $X$, $\nu(f)=\int_X f(y)\,d\nu(y)$ and $T^{-1} f(x)=f(T(x))$ for every $x$ in $X$.

Now we start to prove the first main Theorem, Theorem~\ref{APDG}, which relies on the multiple recurrence theorem due to Furstenberg. See~\cite[Thm. 1.4]{Furstenberg1977} and ~\cite[Thm. II]{FKO1982}.

\begin{theorem}~[Ergodic Szemer\'edi theorem]\
~\label{thm: CP}

Let $(X,\mathcal{B},\nu, T)$ be a dynamical system consisting of a probability measure space $(X,\mathcal{B},\nu)$ and a measure-preserving transformation $T:X\to X$. For any positive integer $k$, there exists $n\in\mathbb{Z}$ such that $$\mu(\bigcap_{j=1}^k T^{-jn}A)>0$$ whenever $\mu(A)>0$.
\end{theorem}

\begin{theorem}~\label{APDG}
Let  $\Sigma=\{F_n\}$ be a sequence of finite subsets in a discrete group $\Gamma$ and suppose $b$ in $\Gamma$ fixes $\Sigma$ from right~(left). If a subset $\Lambda$ of $\Gamma$ has positive upper density with respect to $\Sigma$, then for any positive integer $k$, there exist $n>0$ and $a\in\Gamma$ such that  $\{b^{jn}a\}_{j=0}^{k-1}$~($\{ab^{jn}\}_{j=0}^{k-1}$) is contained in $\Lambda$ .
\end{theorem}

\begin{proof}
We only need to give a proof for the case that $b$ in $\Gamma$ fixes $\Sigma$ from right.

If $b$ in $\Gamma$ fixes $\Sigma$ from left and $\Lambda$ is a subset of $\Gamma$ with positive upper density with respect to  $\Sigma=\{F_n\}$, then $b^{-1}$  fixes $\Sigma^{-1}=\{F_n^{-1}\}$ from left and $\Lambda^{-1}$ is a subset with positive upper density with respect to $\Sigma^{-1}$ .  The existence of $\{b^{jn}a\}_{j=1}^k$ in $\Lambda^{-1}$ gives the existence of $\{ab^{jn}\}_{j=1}^k$ in $\Lambda$.

Let $\Gamma$ act on $\{0,1\}^\Gamma$ by shift, that is, $s\cdot x(t)=x(ts)$ for all $s,t$ in $\Gamma$ and $x$ in $\{0,1\}^\Gamma$. Define $A_0:=\{x\in \{0,1\}^\Gamma| x(e_\Gamma)=1\}$ and let
$\omega=1_{\Lambda}$ be the characteristic function of $\Lambda$.

Suppose $b$ in $\Gamma$ fixes $\Sigma$ from left.

Then for every positive integer $k$, one has
\begin{align*}
\exists \,a\in\Gamma \, {\rm such \, that}\, \{b^ja\}_{j=1}^k\subseteq \Lambda;   &\Longleftrightarrow   \exists \,a \in\Gamma \, {\rm such \, that}\,\omega(b^ja)=1 \quad {\rm for \,all}  \, 1\leq j\leq k;   \\
& \Longleftrightarrow\exists\, a\in\Gamma \, {\rm such \, that}\, b^ja\cdot \omega(e_\Gamma)=1\quad {\rm for\, all}  \, 1\leq j\leq k;  \\
&\Longleftrightarrow\exists \,a\in\Gamma \, {\rm such \, that}\, \{b^ja\cdot \omega\}_{j=1}^k\subseteq A_0.
\end{align*}

Let $X=\overline{\Gamma\cdot\omega}$ be the closure of the orbit of $\omega$ in $\{0,1\}^\Gamma$.

Let $A=A_0\cap X$, which is a closed subset of $X$.

It follows that
\begin{align*}
&\exists \,a\in\Gamma \, {\rm such \, that}\,\displaystyle\{b^ja\cdot \omega\}_{j=1}^k\subseteq A_0;\Longleftrightarrow \exists \,a\in\Gamma \, {\rm such \, that}\,\omega\in\bigcap_{j=1}^k (b^ja)^{-1}\cdot A_0; \\
&\Longleftrightarrow\exists \,a\in\Gamma \, {\rm such \, that}\, a\cdot\omega\in\bigcap_{j=1}^k b^{-j}\cdot A_0; \Longleftrightarrow\bigcap_{j=1}^k b^{-j}\cdot A_0\cap\Gamma\cdot\omega\neq\emptyset; \\
\tag{$A_0$ is open$\Longrightarrow \bigcap_{j=1}^k b^{-j}\cdot A_0$ is open.}\\
&\Longleftrightarrow\bigcap_{j=1}^k b^{-j}\cdot A_0\cap\overline{\Gamma\cdot\omega}\neq\emptyset;
 \Longleftrightarrow \bigcap_{j=1}^k b^{-j}A\neq\emptyset.
\end{align*}

Next we are going to construct a $b$-invariant Borel probability measure $\mu$ on $X$ such that $\displaystyle\mu(A)>0$.  By Theorem~\ref{thm: CP}, this will complete the proof.

Let $\delta_{s\cdot\omega}$ be the Dirac measure at the point $s\cdot\omega$ for $s$ in $\Gamma$,  and this is a Borel probability measure on $X$.

Define $\mu_n=\frac{1}{|F_n|}\sum_{t\in F_n} \delta_{t\cdot\omega}$.  Let $\mu$ be a weak-$*$ limit of $\mu_n$. Without loss of generality, let $\displaystyle\mu=\displaystyle\lim_{n\to\infty}\mu_n$.

Then the following two claims hold.
\begin{enumerate}
\item $\mu$ is $b$-invariant.
\item  $\mu(A)>0$.
\end{enumerate}
\begin{proof}~[Proof of the first claim]

For every continuous function $f$ on $X$, one has
\begin{align*}
\mu(b^{-1}\cdot f)=&\lim_{n\to\infty}\mu_n(b^{-1}\cdot f)=\lim_{n\to\infty}\frac{1}{|F_n|}\sum_{t\in F_n}b^{-1}\cdot f(t\cdot\omega)  \\
=&\lim_{n\to\infty}\frac{1}{|F_n|}\sum_{t\in F_n} f(bt\cdot\omega)=\lim_{n\to\infty}\frac{1}{|F_n|}\sum_{t\in bF_n}f(t\cdot\omega)  \\
\tag{$b$ fixes $\Sigma$ from right.} \\
=&\lim_{n\to\infty}\frac{1}{|F_n|}\sum_{t\in F_n}f(t\cdot\omega)=\mu(f).
\end{align*}
Hence $\mu$ is $b$-invariant.
\end{proof}

\begin{proof}~[Proof of the second claim]

Note that
\begin{align*}
\limsup_{n\to\infty}\mu_n(A)&=\limsup_{n\to\infty}\frac{|\{t\in F_n|\,t\cdot\omega\in A\}|}{|F_n|}  \\
&=\limsup_{n\to\infty}\frac{|\{t\in F_n|\,t\cdot\omega\in A_0\}|}{|F_n|}  \\
&=\limsup_{n\to\infty}\frac{|\{t\in F_n|\,t\cdot\omega(e_\Gamma)=1\}|}{|F_n|}  \\
&=\limsup_{n\to\infty}\frac{|\{t\in F_n|\,\omega(t)=1\}|}{|F_n|}  \\
&=\limsup_{n\to\infty}\frac{| F_n\cap\Lambda |}{|F_n|}  \\
&=\overline{D}_\Sigma(\Lambda)>0.
\end{align*}

Since $A$ is a closed subset of $X$, we have $\displaystyle\mu(A)\geq\displaystyle\limsup_{n\to\infty}\mu_n(A)>0$~\cite[Sec. 6.1, Remarks (3)]{Walters1982}.
\end{proof}
Applying Theorem~\ref{thm: CP} to the dynamical system $(X,\mu,b)$ gives the proof.

\end{proof}

\begin{remark}
A set $\Lambda$ has positive upper density with respect to  $\Sigma$ iff it has positive density with respect to a subsequence of $\Sigma$, hence without loss of generality, we can just assume that $\Lambda$ has positive  density with respect to a  sequence.  We implicitly use this fact in the proof of Theorem~\ref{APDG}.
\end{remark}

A sequence $\{F_n\}_{n=1}^\infty$ of finite subsets in a countable discrete group $\Gamma$ is called a left~(right) {\bf F\o lner sequence} if
$$\displaystyle\lim_{n\to\infty}\frac{|sF_n\Delta F_n|}{|F_n|}=0   \quad
(\displaystyle\lim_{n\to\infty}\frac{|F_ns\Delta F_n|}{|F_n|}=0)$$ for every $s$ in $\Gamma$.  A group $\Gamma$ having  a F\o lner sequence is called {\bf amenable}.

\begin{remark}
It might happen that  except the neutral element,  no other element in $\Gamma$ fixes $\Sigma$ for a sequence $\Sigma$. For instance no integer except 0 fixes $\Sigma=\{F_n\}_{n=1}^\infty$ for $F_n=\{2, 4,\cdots, 2^n\}$.  So choices of $\Sigma$ decide the elements fixed by it.

When $\Gamma$ is amenable, and one can choose $\Sigma$ to be a left~(right) F\o lner sequence of $\Gamma$. Then every $b$ in $\Gamma$ fixes $\Sigma$ from right~(left).
\end{remark}
So Theorem~\ref{APDG} gives  the following application.

\begin{corollary}~[Arithmetic progressions in amenable groups]
~\label{cor: SZM}\

In a subset $\Lambda$ of an amenable group $\Gamma$ with positive upper density with respect to a left~(right) F\o lner sequence, for every positive integer $k$ and every $b$ in $\Gamma$, there exist $a$ in $\Gamma$ and a positive integer $n$ such that $\{b^{jn}a\}_{j=1}^k$~($\{ab^{jn}\}_{j=1}^k$) is contained in $\Lambda$.

Moreover if $\Gamma$ contains $\mathbb{Z}$ as a subgroup, then a subset of $\Gamma$ with positive upper density with respect to a  left~(right) F\o lner sequence contains arbitrarily long left~(right) arithmetic progressions.

\end{corollary}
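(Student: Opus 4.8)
The plan is to derive Corollary~\ref{cor: SZM} directly from Theorem~\ref{APDG}; the corollary is essentially a repackaging of that theorem, and the only genuinely new inputs are the elementary observation that a one-sided F\o lner sequence is fixed by every group element (recorded already in the Remark preceding the corollary) together with, for the second assertion, the use of an infinite-order element to preclude degenerate progressions. By the inverse symmetry exploited at the start of the proof of Theorem~\ref{APDG}---passing to $\Sigma^{-1}=\{F_n^{-1}\}$ and $\Lambda^{-1}$, which interchanges the two one-sided F\o lner conditions as well as the two flavours of arithmetic progression---it suffices to treat the left-F\o lner / left-progression case.

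First I would unwind the definitions. If $\Sigma=\{F_n\}$ is a left F\o lner sequence, then $\lim_{n\to\infty}\frac{|sF_n\Delta F_n|}{|F_n|}=0$ for every $s\in\Gamma$; taking $s=b$, this is exactly the statement that $b$ fixes $\Sigma$ from right in the sense required by Theorem~\ref{APDG}. Consequently, on an amenable $\Gamma$ equipped with a left F\o lner sequence $\Sigma$, the hypothesis of Theorem~\ref{APDG} is satisfied by \emph{every} $b\in\Gamma$. Fixing such a $b$ and a positive integer $k$, and given $\Lambda$ of positive upper density with respect to $\Sigma$, Theorem~\ref{APDG} supplies $n>0$ and $a\in\Gamma$ with $\{b^{jn}a\}_{j=0}^{k-1}\subseteq\Lambda$. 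Replacing $a$ by $b^{n}a$ (equivalently, invoking the theorem with $k+1$ in place of $k$) shifts the index range to give $\{b^{jn}a\}_{j=1}^{k}\subseteq\Lambda$, which is the first assertion.

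For the ``moreover'' clause I would specialize $b$ to the image of a generator of the given copy of $\mathbb{Z}$, so that $b$ has infinite order. For each $k$ the previous step yields $n\geq 1$ and $a\in\Gamma$ with $\{b^{jn}a\}_{j=1}^{k}\subseteq\Lambda$; since $b^{jn}a=b^{j'n}a$ forces $b^{(j-j')n}=e_\Gamma$ and $b$ has infinite order with $n\neq 0$, the map $j\mapsto b^{jn}a$ is injective. Thus these $k$ distinct elements constitute a genuine left arithmetic progression with common ratio $b^{n}\neq e_\Gamma$, and letting $k\to\infty$ produces left arithmetic progressions of arbitrary length inside $\Lambda$.

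I do not anticipate a real obstacle, since all the analytic content is carried by Theorem~\ref{APDG} and, through it, by the ergodic Szemer\'edi theorem. The two points demanding care are purely organizational: matching each one-sided F\o lner condition to the correct ``fixes from right/left'' hypothesis so that the left- versus right-progression conclusions emerge on the correct side, and---indispensably for the second assertion---using the infinite order of $b$ to prevent the progression from collapsing (if $b^{n}=e_\Gamma$ the purported progression would degenerate to a single point). The hypothesis that $\Gamma$ contains $\mathbb{Z}$ is precisely what provides the required infinite-order element.
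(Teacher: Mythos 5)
Your proposal is correct and takes essentially the same route as the paper's proof: translate the left (right) F\o lner condition into the hypothesis that every $b\in\Gamma$ fixes $\Sigma$ from right (left), apply Theorem~\ref{APDG}, and use the infinite order of an element coming from the embedded copy of $\mathbb{Z}$ to see that the $k$ elements $\{b^{jn}a\}_{j=1}^{k}$ are distinct. Your explicit index shift (replacing $a$ by $b^{n}a$ to pass from $j=0,\dots,k-1$ to $j=1,\dots,k$) handles a detail the paper leaves implicit, but it changes nothing substantive.
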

\begin{proof}
If $\Sigma$ is a left F\o lner sequence in an amenable group $\Gamma$, then every $b$ in $\Gamma$ fixes $\Sigma$ from right.  By Theorem~\ref{APDG}, the first statement holds.

Since $\mathbb{Z}$ is a subgroup of $\Gamma$, there exists an element b of infinite order in $\Gamma$.  By Theorem~\ref{APDG},  for every positive integer $k$, there exist $a$ in $\Gamma$ and a positive integer $n$ such that $\Lambda$ contains $\{b^{jn}a\}_{j=1}^k$~($\{ab^{jn}\}_{j=1}^k$).  Since $b$ is of infinite order, the set $\{b^{jn}a\}_{j=1}^k$~($\{ab^{jn}\}_{j=1}^k$) has $k$ distinct elements.  Hence it is a left~(right) arithmetic progression of length $k$.%we have  to b gives the proof of the second statement.
\end{proof}

\section{The Case for Quantum Groups}

%\subsection{Discrete Quantum Group Orbits}\

\subsection{The Quantum Subgroup Fixing A Sequence of Finite Subsets}\

Denote the elements in $\widehat{\mathbb{G}}$ fixed by a sequence of finite subsets $\Sigma=\{F_n\}_{n=1}^\infty$ by $\widehat{\mathbb{G}}_\Sigma$.

In this subsection we are going to prove that $\widehat{\mathbb{G}}_\Sigma$ is a quantum subgroup of $\widehat{\mathbb{G}}$. This amounts to say that the $C^*$-subalgebra $C^*(\widehat{\mathbb{G}}_\Sigma)$ generated by the matrix entries of all elements in $\widehat{\mathbb{G}}_\Sigma$ is a compact quotient group of $\mathbb{G}$, that is,  $C^*(\widehat{\mathbb{G}}_\Sigma)$ is a compact quantum group.
\begin{theorem}
~\label{thm: gp}
Given a sequence $\Sigma$ of finite subsets of $\widehat{\mathbb{G}}_\Sigma$ for a compact quantum group $\mathbb{G}$, the $C^*$-algebra  $C^*(\widehat{\mathbb{G}}_\Sigma)$ is a compact quantum group.
\end{theorem}
\begin{proof}
  We are going to verify the following:
  \begin{enumerate}
    \item The trivial representation $\gamma_0$ is in $\widehat{\mathbb{G}}_\Sigma$.
    \item If $\gamma$ is in $\widehat{\mathbb{G}}_\Sigma$, then its conjugate $\bar{\gamma}$ is also in $\widehat{\mathbb{G}}_\Sigma$.
    \item  If $\gamma_1,\gamma_2$ are in $\widehat{\mathbb{G}}_\Sigma$, then every $\gamma\in\widehat{\mathbb{G}}$ contained in $\gamma_1\gamma_2$ is also in $\widehat{\mathbb{G}}_\Sigma$.
  \end{enumerate}

Firstly  for every finite subset $F$ of $\widehat{\mathbb{G}}_\Sigma$, the set $\partial_{\gamma_0}(F)$ is always empty. So $\gamma_0$ is in $\widehat{\mathbb{G}}_\Sigma$.

To prove (2) and (3), we need a lemma.
\begin{lemma}~\label{lm: multiplicity}
For $\alpha, \beta,\gamma$ in $\widehat{\mathbb{G}}_\Sigma$, we have $\displaystyle\sum_{\beta\in\widehat{\mathbb{G}}_\Sigma,\, N_{\alpha,\gamma}^\beta>0}d_\beta^2\leq d_\alpha^2 d_\gamma^2$
 and $\displaystyle\sum_{\alpha\in\widehat{\mathbb{G}}_\Sigma,\, N_{\alpha,\gamma}^\beta>0}d_\alpha^2\leq d_\beta^2 d_\gamma^2$.%if $N_{\alpha,\gamma}^\beta>0$, then $\frac{d_\beta}{d_\gamma}\leq d_\alpha\leq d_\beta d_\gamma$.
\end{lemma}
\begin{proof}
Note that
$$d_\alpha d_\gamma=\sum_{\beta\in\widehat{\mathbb{G}}} N_{\alpha,\gamma}^\beta d_\beta
 =\sum_{\beta\in\widehat{\mathbb{G}},\, N_{\alpha,\gamma}^\beta>0} N_{\alpha,\gamma}^\beta d_\beta
\geq \sum_{\beta\in\widehat{\mathbb{G}},\, N_{\alpha,\gamma}^\beta>0}  d_\beta.$$

So $d_\alpha^2 d_\gamma^2\geq (\sum_{\beta\in\widehat{\mathbb{G}},\, N_{\alpha,\gamma}^\beta>0}  d_\beta)^2\geq \sum_{\beta\in\widehat{\mathbb{G}}_\Sigma,\, N_{\alpha,\gamma}^\beta>0}d_\beta^2$.

%So $N_{\alpha,\gamma}^\beta d_\beta\leq d_\alpha d_\gamma$ for every $\beta$ in $\widehat{\mathbb{G}}$. When $N_{\alpha,\gamma}^\beta>0$, we have $d_\beta\leq d_\alpha d_\gamma$ since $N_{\alpha,\gamma}^\beta\geq 1$. Hence $\frac{d_\beta}{d_\gamma}\leq d_\alpha$ if $N_{\alpha,\gamma}^\beta>0$.

Furthermore  by Frobenius reciprocity law, we have  $N_{\alpha,\gamma}^\beta=N_{\beta,\bar{\gamma}}^\alpha$. Hence
\begin{align*}
&d_\beta d_\gamma=d_\beta d_{\bar{\gamma}}=\sum_{\alpha\in\widehat{\mathbb{G}}} N_{\beta,\bar{\gamma}}^\alpha d_\alpha
 =\sum_{\beta\in\widehat{\mathbb{G}},\, N_{\beta,\bar{\gamma}}^\alpha>0} N_{\beta,\bar{\gamma}}^\alpha d_\alpha  \\
&=\sum_{\beta\in\widehat{\mathbb{G}},\, N_{\alpha,\gamma}^\beta>0} N_{\alpha,\gamma}^\beta d_\alpha  \geq\sum_{\alpha\in\widehat{\mathbb{G}},\, N_{\alpha,\gamma}^\beta>0}  d_\alpha.
\end{align*}
%a similar argument as above, we have
So $d_\beta^2 d_\gamma^2\geq (\sum_{\alpha\in\widehat{\mathbb{G}},\, N_{\alpha,\gamma}^\beta>0}  d_\alpha)^2\geq \sum_{\alpha\in\widehat{\mathbb{G}}_\Sigma,\, N_{\alpha,\gamma}^\beta>0}d_\alpha^2$.
\end{proof}

We prove (2) via proving that $\displaystyle\lim_{n\to\infty}\frac{|\partial_{\bar{\gamma}}(F_n)|_w}{|F_n|_w}=0$ provided $\displaystyle\lim_{n\to\infty}\frac{|\partial_\gamma(F_n)|_w}{|F_n|_w}=0$.

By definition
\begin{align*}
\partial_{\bar{\gamma}}(F_n)&=\{\alpha\in F_n\,| N_{\alpha,\bar{\gamma}}^\beta>0\, {\rm for\, some\,}\beta\notin F_n\}\cup
\{\alpha\notin F_n\,| N_{\alpha,\bar{\gamma}}^\beta>0\, {\rm for\, some\,}\beta\in F_n\}    \\
&\notag{(Frobenius \,reciprocity\, law)} \\
&=\{\alpha\in F_n\,| N_{\beta,\gamma}^\alpha>0\, {\rm for\, some\,}\beta\notin F_n\}\cup \{\alpha\notin F_n\,| N_{\beta,\gamma}^\alpha>0\, {\rm for\, some\,}\beta\in F_n\}.
\end{align*}
On the other hand
$$\partial_\gamma(F_n)=\{\alpha\in F_n\,| N_{\alpha,\gamma}^\beta>0\, {\rm for\, some\,}\beta\notin F_n\}\cup
\{\alpha\notin F_n\,| N_{\alpha,\gamma}^\beta>0\, {\rm for\, some\,}\beta\in F_n\}.$$

We can define a map $\fai$ from $\partial_{\bar{\gamma}}(F_n)$ to $\partial_\gamma(F_n)$ by the following:

when   $\alpha\in\partial_{\bar{\gamma}}(F_n)\cap F_n$, the image $\fai(\alpha)$ is given by some $\beta$  in $F_n^c$ with $N_{\alpha,\bar{\gamma}}^\beta>0$; when $\alpha\in\partial_{\bar{\gamma}}(F_n)\cap F_n^c$, the image $\fai(\alpha)$ is given by some $\beta$ in $F_n$ with $N_{\alpha,\bar{\gamma}}^\beta>0$. From $N_{\alpha,\bar{\gamma}}^\beta=N_{\beta,\gamma}^\alpha$, we have that $\fai(\alpha)$ is in $\partial_\gamma(F_n)$. By Lemma~\ref{lm: multiplicity}, we have
$\displaystyle\sum_{\fai(\alpha)=\beta}d_\alpha^2\leq d_\beta^2d_\gamma^2$.

Hence
$$|\{\alpha\in F_n\,| N_{\beta,\gamma}^\alpha>0\, {\rm for\, some\,}\beta\notin F_n\}|_w\leq d_\gamma^2 |\{\beta\notin F_n\,| N_{\alpha,\gamma}^\beta>0\, {\rm for\, some\,}\alpha\in F_n\}|_w$$ and
$$|\{\alpha\notin F_n\,| N_{\beta,\gamma}^\alpha>0\, {\rm for\, some\,}\beta\in F_n\}|_w\leq d_\gamma^2 |\{\beta\in F_n\,| N_{\alpha,\gamma}^\beta>0\, {\rm for\, some\,}\alpha\notin F_n\}|_w.$$

Therefore $|\partial_{\bar{\gamma}}(F_n)|_w\leq d_\gamma^2 |\partial_\gamma(F_n)|_w$ and (2) follows immediately.

Now we proceed to the proof of (3).

Suppose $\gamma_1$ and $\gamma_2$ are in $\widehat{\mathbb{G}}_\Sigma$. We are going to prove that
$$\lim_{n\to\infty}\frac{|\partial_{\gamma_1\gamma_2} (F_n)|_w}{|F_n|_w}=0.$$
By definition
$$\partial_{\gamma_1\gamma_2} (F_n)=\{\alpha\in F_n\,|N_{\alpha,\gamma_1\gamma_2}^\beta>0\, {\rm\, for\, some\,}\beta\notin F_n\}\cup\{\alpha\notin F_n \,|N_{\alpha,\gamma_1\gamma_2}^\beta>0\, {\rm\, for\, some\,}\beta\in F_n\}.$$
Since $\alpha(\gamma_1\gamma_2)=(\alpha\gamma_1)\gamma_2$, we have $N_{\alpha,\gamma_1\gamma_2}^\beta=N_{\alpha\gamma_1,\gamma_2}^\beta$ for all $\alpha,\beta$ in $\widehat{\mathbb{G}}$.

Hence
$$\{\alpha\in F_n\,|N_{\alpha,\gamma_1\gamma_2}^\beta>0\, {\rm\, for\, some\,}\beta\notin F_n\}=\{\alpha\in F_n\,|N_{\alpha\gamma_1,\gamma_2}^\beta>0\, {\rm\, for\, some\,}\beta\notin F_n\}.$$
Note that $N_{\alpha\gamma_1,\gamma_2}^\beta=\sum_{\gamma\in \widehat{\mathbb{G}}}N_{\alpha,\gamma_1}^\gamma N_{\gamma,\gamma_2}^\beta$.

Suppose $\alpha$ is in $F_n$ with $N_{\alpha,\gamma_1\gamma_2}^\beta>0$ for some $\beta\notin F_n$. Then there exists
$\gamma\in \widehat{\mathbb{G}}$ such that $N_{\alpha,\gamma_1}^\gamma>0$ and $N_{\gamma,\gamma_2}^\beta>0$ for some $\beta\notin F_n$.

If $\gamma$ is not in $F_n$, then $\alpha$ is in $\{\eta\in F_n\,|N_{\eta,\gamma_1}^\zeta>0  {\rm\, for\, some\,}\zeta\notin F_n\}\subseteq \partial_{\gamma_1}(F_n)$.

If $\gamma$ is in $F_n$, then $\gamma$ is in $\{\eta\in F_n\,|N_{\eta,\gamma_2}^\zeta>0  {\rm\, for\, some\,}\zeta\notin F_n\}\subseteq\partial_{\gamma_2}(F_n)$.  We can define a map $\fai: \{\alpha\in F_n\,|N_{\alpha,\gamma_1\gamma_2}^\beta>0\, {\rm\, for\, some\,}\beta\notin F_n\}\to \{\eta\in F_n\,|N_{\eta,\gamma_2}^\zeta>0  {\rm\, for\, some\,}\zeta\notin F_n\}$ by $\fai(\alpha)=\gamma$ for some $\gamma$ with $N_{\gamma,\gamma_2}^\zeta>0$. By Lemma~\ref{lm: multiplicity}, we know that $\displaystyle\sum_{\alpha\in F_n,\,\fai(\alpha)=\gamma} d_\alpha^2\leq d_\gamma^2 d_{\gamma_1}^2$.

Therefore
\begin{align*}
&|\{\alpha\in F_n\,|N_{\alpha,\gamma_1\gamma_2}^\beta>0\, {\rm\, for\, some\,}\beta\notin F_n\}|_w \\
&\leq d_{\gamma_1}^2(|\{\eta\in F_n\,|N_{\eta,\gamma_1}^\zeta>0  {\rm\, for\, some\,}\zeta\notin F_n\}|_w+ |\{\eta\in F_n\,|N_{\eta,\gamma_2}^\zeta>0  {\rm\, for\, some\,}\zeta\notin F_n\}|_w).
\end{align*}

Moreover $$\{\alpha\notin F_n\,|N_{\alpha,\gamma_1\gamma_2}^\beta>0\, {\rm\, for\, some\,}\beta\in F_n\}=\{\alpha\notin F_n\,|N_{\alpha\gamma_1,\gamma_2}^\beta>0\, {\rm\, for\, some\,}\beta\in F_n\}.$$

As before if $\alpha$ is  not in $F_n$ with $N_{\alpha,\gamma_1\gamma_2}^\beta>0$ for some $\beta\in F_n$, then there exists
$\gamma\in \widehat{\mathbb{G}}$ such that $N_{\alpha,\gamma_1}^\gamma>0$ and $N_{\gamma,\gamma_2}^\beta>0$ for some $\beta\in F_n$.

If $\gamma$ is not in $F_n$, then $\gamma$ is in $\{\eta\notin F_n\,|N_{\eta,\gamma_2}^\zeta>0  {\rm\, for\, some\,}\zeta\in F_n\}\subseteq\partial_{\gamma_2}(F_n)$. We can define a map $\psi: \{\alpha\notin F_n\,|N_{\alpha,\gamma_1\gamma_2}^\beta>0\, {\rm\, for\, some\,}\beta\in F_n\}\to \{\eta\notin F_n\,|N_{\eta,\gamma_2}^\zeta>0  {\rm\, for\, some\,}\zeta\in F_n\}$ by $\psi(\alpha)=\gamma$ with  $N_{\gamma,\gamma_2}^\zeta>0$. By Lemma~\ref{lm: multiplicity}, we have  $\displaystyle\sum_{\alpha\notin F_n,\,\psi(\alpha)=\gamma} d_\alpha^2\leq d_\gamma^2 d_{\gamma_2}^2$.

If $\gamma$ is in $F_n$, then $\alpha$ is in $\{\eta\notin F_n\,|N_{\eta,\gamma_1}^\zeta>0  {\rm\, for\, some\,}\zeta\in F_n\}\subseteq\partial_{\gamma_1}(F_n)$.

Hence
\begin{align*}
  &|\{\alpha\notin F_n\,|N_{\alpha,\gamma_1\gamma_2}^\beta>0\, {\rm\, for\, some\,}\beta\in F_n\}|_w\\
  & \leq d_{\gamma_2}^2( |\{\eta\notin F_n\,|N_{\eta,\gamma_1}^\zeta>0  {\rm\, for\, some\,}\zeta\in F_n\}|_w+|\{\eta\notin F_n\,|N_{\eta,\gamma_2}^\zeta>0  {\rm\, for\, some\,}\zeta\in F_n\}|_w).
\end{align*}

Therefore $|\partial_{\gamma_1\gamma_2} (F_n)|_w\leq \max{\{d_{\gamma_1}^2,d_{\gamma_2}^2\}}(|\partial_{\gamma_1}(F_n)|_w+ |\partial_{\gamma_2}(F_n)|_w)$. This proves (3).

\end{proof}

 Give a sequence $\Sigma=\{F_n\}$ of finite subsets in $\widehat{\mathbb{G}}$, we say a subset $\Lambda$ of $\widehat{\mathbb{G}}$ has positive upper density with respect to $\Sigma$ if $\displaystyle\limsup_{n\to\infty}\frac{|\Lambda\cap F_n|_w}{|F_n|_w}>0$.

 For $\alpha,\beta\in\widehat{\mathbb{G}}$, we say  a subset $\Lambda$ of $\widehat{\mathbb{G}}$ contains $\alpha^j\beta$ if every  $\gamma$ contained in $\alpha^j\beta$ is also in $\Lambda$.

Motivated by Theorem~\ref{APDG}, we give the following conjecture.

\begin{conjecture}
 Suppose  a subset $\Lambda$ of $\widehat{\mathbb{G}}$ has positive upper density with respect to $\Sigma$. Then for every $k>0$, there exists $\alpha$ in $\widehat{\mathbb{G}}_\Sigma$, $\beta\in\widehat{\mathbb{G}}$ and $n>0$ such that $\Lambda$ contains $\alpha^{jn}\beta$ for all $0\leq j\leq k-1$.
\end{conjecture}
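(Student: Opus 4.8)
The plan is to transplant the Furstenberg correspondence principle behind Theorem~\ref{APDG} from a discrete group onto the fusion ring of $\widehat{\mathbb{G}}$, letting the tensor product $\alpha\gamma=\sum_\delta N_{\alpha,\gamma}^\delta\delta$ play the role of left translation by $b$ and the tensor powers $\alpha^{jn}$ the role of $b^{jn}$. Since fusion by a fixed $\alpha$ is multi-valued, the associated ``transformation'' is not a point map but the stochastic operator $L_\alpha$ on $\ell^\infty(\widehat{\mathbb{G}})$ given by
$$(L_\alpha f)(\gamma)=\frac{1}{d_\alpha d_\gamma}\sum_{\delta\in\widehat{\mathbb{G}}}N_{\alpha,\gamma}^\delta\,d_\delta\, f(\delta),$$
which is unital and row-stochastic precisely because $\sum_\delta N_{\alpha,\gamma}^\delta d_\delta=d_\alpha d_\gamma$. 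The support of the $\beta$-th row of $L_\alpha^{jn}$ is exactly $\{\delta:N_{\alpha^{jn},\beta}^\delta>0\}$, the set of irreducibles occurring in $\alpha^{jn}\beta$, so the target conclusion ``$\Lambda$ contains $\alpha^{jn}\beta$ for all $0\le j\le k-1$'' is the assertion that this row stays supported inside $\Lambda$ for every such $j$.

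First I would produce the invariant data by averaging with the weight $d_\gamma^2$: form the states $\mu_n(f)=\frac{1}{|F_n|_w}\sum_{\gamma\in F_n}d_\gamma^2 f(\gamma)$ on $\ell^\infty(\widehat{\mathbb{G}})$ and pass to a weak-$*$ limit $\mu$. The hypothesis $\alpha\in\widehat{\mathbb{G}}_\Sigma$, i.e.\ $|\partial_\alpha(F_n)|_w/|F_n|_w\to 0$, is exactly the estimate needed to show $\mu$ is $L_\alpha$-invariant, in the same way that $|bF_n\Delta F_n|/|F_n|\to 0$ gave $b$-invariance in the first claim of the proof of Theorem~\ref{APDG}; the boundary bounds controlled by $d_\alpha^2$ via Lemma~\ref{lm: multiplicity} are the quantitative input here. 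Positive upper density of $\Lambda$ with respect to $\Sigma$ should then force $\mu(1_\Lambda)>0$, the analogue of $\mu(A)>0$.

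The hard part is the final recurrence step. Furstenberg's Theorem~\ref{thm: CP} governs an invertible measure-preserving \emph{point} transformation, whereas $L_\alpha$ is only the Markov operator of the fusion random walk: it has no inverse and no underlying point dynamics, and a multiple-recurrence theorem for operators of this kind is simply not available. A second, quantum-specific difficulty compounds this: ``$\Lambda$ contains $\alpha^{jn}\beta$'' is a support-containment condition, demanding that \emph{every} irreducible appearing in $\alpha^{jn}\beta$ lie in $\Lambda$, which is far stronger than the positive-measure recurrence a Furstenberg-type theorem would supply. Bridging this gap would require a combinatorial, $0/1$-law style strengthening of multiple recurrence along the powers of $L_\alpha$, and this is where I expect the genuine obstruction to lie.

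As a consistency check, when $\mathbb{G}$ is a compact \emph{abelian} group the fusion ring is its Pontryagin dual group, all $d_\gamma=1$, the operator $L_\alpha$ is honest translation, and the conjecture collapses back to Theorem~\ref{APDG}. The substance therefore sits in the non-Kac and noncommutative-fusion regimes: already for $\widehat{SU(2)}$, where $L_{V_1}$ is the nearest-neighbour walk on $\mathbb{Z}_{\ge0}$ and $\alpha^{jn}\beta$ spreads over a growing interval of spins, the containment requirement looks much more rigid than a classical progression, which is why I suspect the statement is delicate enough to warrant being left as a conjecture rather than a theorem.
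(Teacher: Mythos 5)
The statement you were asked to prove is left as an open \emph{conjecture} in the paper: the author supplies no proof, only the remark that it is ``motivated by Theorem~\ref{APDG}.'' Your submission is, by your own account, not a proof either --- it is a strategy sketch that stalls at the decisive step. That said, the first half of your plan is a faithful and sensible transplant of the paper's proof of Theorem~\ref{APDG}: replacing translation by $b$ with the fusion Markov operator $L_\alpha$, replacing the averaged Dirac measures $\mu_n=\frac{1}{|F_n|}\sum_{t\in F_n}\delta_{t\cdot\omega}$ with the weighted states $\mu_n(f)=\frac{1}{|F_n|_w}\sum_{\gamma\in F_n}d_\gamma^2 f(\gamma)$, and using $|\partial_\alpha(F_n)|_w/|F_n|_w\to 0$ together with the dimension bounds of Lemma~\ref{lm: multiplicity} to get $L_\alpha$-invariance of a weak-$*$ limit $\mu$ and $\mu(1_\Lambda)>0$. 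Those two steps mirror the two claims in the proof of Theorem~\ref{APDG} and look carriable-out.

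The genuine gap is exactly where you locate it, and it is twofold. First, Theorem~\ref{thm: CP} (Furstenberg's multiple recurrence) applies to a measure-preserving \emph{point} transformation on a probability space; $L_\alpha$ is a non-invertible transfer operator with no underlying point dynamics, and no multiple recurrence theorem along its powers is cited or available in the paper. Second, even granting such a recurrence statement, it would deliver only that a set of positive $\mu$-measure meets $\Lambda$ along the progression, whereas the conjecture demands that \emph{every} irreducible constituent of $\alpha^{jn}\beta$ lie in $\Lambda$ --- a support-containment condition strictly stronger than positive-measure intersection whenever some $d_\gamma>1$ or the fusion rules branch. Neither obstruction is addressed by anything in the paper, which is consistent with the statement being posed as a conjecture; your proposal should therefore be read as a correct diagnosis of why the problem is open, not as a proof.
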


\subsection{Discrete Quantum Group Orbits and a Mean Ergodic Theorem for Discrete Quantum Groups}\

Given a compact quantum group $\mathbb{G}$, the dual $\widehat{\mathbb{G}}$ is a discrete quantum group. Conversely for a discrete quantum group $\Gamma$, there is a compact quantum group $\mathbb{G}$ such that $\Gamma=\widehat{\mathbb{G}}$. With this in mind, when we talk about a discrete quantum group, we mean the dual of a compact quantum group~\cite{PodlesWoronowicz1990,Vandaele1996,MaesVanDaele1998,KustermansVaes1999,KustermansVaes2000,SoltanWoronowicz2007}.

Consider an action of a discrete quantum group $\widehat{\mathbb{G}}$ on a Hilbert space $H$, that is, a representation $\pi$ of $C(\mathbb{G})$ on $H$. We come up a definition of discrete quantum orbits in a Hilbert space. Various definitions of compact quantum group orbits already appear in~\cite{Sain2009, Huang2016-1,DKSS2016}.

\begin{definition}

For $x$ in $H$, we define the \textbf{ orbit} of $x$ as
$$\{y\in H\,|\,\exists\,\alpha,\beta\in\widehat{\mathbb{G}}\, {\rm \,such\,that}\,\frac{\pi(\chi(\alpha))}{d_\alpha}x=\frac{\pi(\chi(\beta))}{d_\beta}y\},$$ and denote it by ${\rm Orb}(x,\widehat{\mathbb{G}})$.
\end{definition}

 The following are true.
\begin{proposition}~[Properties of orbits]\

  \begin{enumerate}
    \item $x\in {\rm Orb}(x,\widehat{\mathbb{G}})$;
    \item If $y\in {\rm Orb}(x,\widehat{\mathbb{G}})$, then $x\in {\rm Orb}(y,\widehat{\mathbb{G}})$;
    \item $\{\frac{\pi(\chi(\alpha))}{d_\alpha}x\}_{\alpha\in\widehat{\mathbb{G}}}\subseteq {\rm Orb}(x,\widehat{\mathbb{G}})$.
  \end{enumerate}
\end{proposition}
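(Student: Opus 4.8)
The plan is to argue directly from the definition of ${\rm Orb}(x,\widehat{\mathbb{G}})$, observing that the defining relation is symmetric in its two sides and that the trivial representation supplies a neutral witness. The one background fact I would record first is that the trivial representation $\gamma_0$ is one-dimensional with $\chi(\gamma_0)=1_A$ and $d_{\gamma_0}=1$; since $\pi$ is a unital representation of $C(\mathbb{G})$ on $H$, this gives $\frac{\pi(\chi(\gamma_0))}{d_{\gamma_0}}z=\pi(1_A)z=z$ for every $z\in H$. This is the only ingredient from the preliminaries that the argument actually needs.

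For (1) I would take $\alpha=\beta=\gamma_0$ (any common choice $\alpha=\beta$ works equally well): the equality $\frac{\pi(\chi(\alpha))}{d_\alpha}x=\frac{\pi(\chi(\beta))}{d_\beta}x$ then holds trivially, so $x\in{\rm Orb}(x,\widehat{\mathbb{G}})$. For (2), if $y\in{\rm Orb}(x,\widehat{\mathbb{G}})$ there are $\alpha,\beta\in\widehat{\mathbb{G}}$ with $\frac{\pi(\chi(\alpha))}{d_\alpha}x=\frac{\pi(\chi(\beta))}{d_\beta}y$. Reading this same equation in the reverse order and interchanging the roles of the two representations (i.e. using $\beta$ as the first witness and $\alpha$ as the second) is exactly the condition for $x\in{\rm Orb}(y,\widehat{\mathbb{G}})$; thus the relation is symmetric.

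For (3) I would fix $\alpha\in\widehat{\mathbb{G}}$, set $y=\frac{\pi(\chi(\alpha))}{d_\alpha}x$, and use the pair of witnesses $\alpha$ and $\gamma_0$: by the neutrality of $\gamma_0$ noted above, $\frac{\pi(\chi(\alpha))}{d_\alpha}x=y=\frac{\pi(\chi(\gamma_0))}{d_{\gamma_0}}y$, which exhibits $y\in{\rm Orb}(x,\widehat{\mathbb{G}})$. I do not expect any genuine obstacle here: the entire content is that the defining relation is symmetric in $x$ and $y$ and that $\pi(\chi(\gamma_0))={\rm id}_H$. The only point worth stating explicitly is this use of $\gamma_0$ as a neutral element. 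It is worth remarking that no transitivity is asserted, which is consistent with the fact that $\chi(\alpha)\chi(\beta)=\chi(\alpha\beta)$ decomposes only as a sum of irreducible characters and so does not obviously allow one to compose two orbit relations into a single one.
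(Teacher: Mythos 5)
Your proof is correct and follows essentially the same route as the paper: (1) and (2) are read off directly from the symmetry of the defining relation, and (3) uses exactly the paper's identity $\frac{\pi(\chi(\alpha))}{d_\alpha}x=\frac{\pi(\chi(\gamma_0))}{d_{\gamma_0}}\bigl(\frac{\pi(\chi(\alpha))}{d_\alpha}x\bigr)$ with the trivial representation $\gamma_0$ as the neutral witness. Your explicit remark that $\pi(\chi(\gamma_0))={\rm id}_H$ because $\pi$ is unital is the one ingredient the paper leaves implicit, and it is stated correctly.
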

\begin{proof}
  (1) and (2) are immediate from the definition of orbit.

  Note that $\frac{\pi(\chi(\alpha))}{d_\alpha}x=\frac{\pi(\chi(\gamma_0))}{d_{\gamma_0}}(\frac{\pi(\chi(\alpha))}{d_\alpha}x)$ for every $\alpha$ in $\widehat{\mathbb{G}}$. Here $\gamma_0=1$ stands for the trivial representation of $\mathbb{G}$. This proves (3).
\end{proof}

%\subsection{Vectors Fixed by A Sequence of Finite Subsets}\

We say a vector $x$ in $H$  \textbf{is fixed by $\Sigma$}~(a sequence of finite subsets in $\widehat{\mathbb{G}}$) if
$$\pi(\chi(\alpha))x=d_\alpha x$$ for every $\alpha$ in $\cup F_n$.

It's easy to see that the set of vectors fixed by $\Sigma$ is a subspace of $H$. Denote it by $H_\Sigma$.

In this subsection we prove a mean ergodic theorem for discrete quantum groups, which is a generalization of the mean ergodic theorem for amenable discrete quantum groups~\cite{Huang2016-2}.

Let $\Sigma=\{F_n\}$ be a sequence of finite subsets in $\widehat{\mathbb{G}}$. For a representation $\pi:A=C(\mathbb{G})\to B(H)$, consider the sequence of bounded linear operators $\{\frac{1}{|F_n|_w}\sum_{\alpha\in F_n}d_\alpha\pi(\chi(\alpha))\}_{n=1}^\infty$ on $H$. Under  weak operator topology, the unit ball of $B(H)$ is compact and contains $\{\frac{1}{|F_n|_w}\sum_{\alpha\in F_n}d_\alpha\pi(\chi(\alpha))\}_{n=1}^\infty$. Hence there exist limit points for $\{\frac{1}{|F_n|_w}\sum_{\alpha\in F_n}d_\alpha\pi(\chi(\alpha))\}_{n=1}^\infty$.

\begin{theorem}
~\label{thm: met}
  \begin{enumerate}
    \item[(i)] Suppose $T$ is a  limit  of $\{\frac{1}{|F_n|_w}\sum_{\alpha\in F_n}d_\alpha\pi(\chi(\alpha))\}_{n=1}^\infty$ in $B(H)$. Then $TP=PT=P$ where $P$ is the orthogonal projection from $H$ onto $H_\Sigma:=\{x\in H|\,\pi(\chi(\alpha))x=d_\alpha x\, {\rm for\,all}\,\alpha\in\cup F_n\}$;
    \item[(ii)] If $y$ in $H$ belongs to ${\rm Orb}(x,\widehat{\mathbb{G}}_\Sigma)$, then
    $$\lim_{n\to\infty}\frac{1}{|F_n|_w}\sum_{\alpha\in F_n}d_\alpha\pi(\chi(\alpha))(x-y)=0.$$%$T\pi(\chi(\gamma))=d_\gamma T$ for every $\alpha$ in $\widehat{\mathbb{G}}_\Sigma$.
  \end{enumerate}
\end{theorem}
\begin{proof}

(i) For every $x$ in $H_\Sigma$ and every $n$, we have $\frac{1}{|F_n|_w}\sum_{\alpha\in F_n}d_\alpha\pi(\chi(\alpha))x=x$. Hence $TP=P$.
Next we prove that $T^*P=P$ and this will finish the proof.

We need a lemma.
\begin{lemma}~\label{lm: bar}
  If $\pi(\chi(\alpha))x=d_\alpha x$, then $\pi(\chi(\bar{\alpha}))x=d_{\bar{\alpha}} x$.
\end{lemma}
\begin{proof}
Without loss of generality, we may assume that $x$ is a unit vector in $H$.

%Consider the state $\fai_x$ on $A$ given by $\fai_x(a)=\langle \pi(a)x,x\rangle$ for all $a$ in $A$. We have $\fai_x(\chi(\alpha))=d_\alpha$. Then
%Note that
%$$\fai_x(\chi(\alpha)^*)=\fai_x(\chi(\bar{\alpha}))=\overline{d_\alpha}=d_{\bar{\alpha}}.$$
Then
\begin{align*}
 &0\leq \|\pi(\chi(\bar{\alpha}))x-d_{\bar{\alpha}} x\|^2\\
 &=\langle\pi(\chi(\bar{\alpha}))x,\pi(\chi(\bar{\alpha}))x\rangle-\langle\pi(\chi(\bar{\alpha}))x,d_{\bar{\alpha}} x\rangle-\langle d_{\bar{\alpha}} x,\pi(\chi(\bar{\alpha}))x\rangle+\langle d_{\bar{\alpha}} x, d_{\bar{\alpha}} x\rangle \\
  \tag{$\pi(\chi(\bar{\alpha}))x=d_{\bar{\alpha}} x$ implies that $\langle\pi(\chi(\bar{\alpha}))x,d_{\bar{\alpha}} x\rangle=\langle d_{\bar{\alpha}} x,\pi(\chi(\bar{\alpha}))x\rangle=d_\alpha^2=d_{\bar{\alpha}}^2$.} \\
 &=\langle\pi(\chi(\bar{\alpha}))x,\pi(\chi(\bar{\alpha}))x\rangle-d_{\bar{\alpha}}^2
 \tag{$\|\chi(\bar{\alpha})\|\leq d_{\bar{\alpha}}$.}
 \leq 0.
\end{align*}

Hence $\pi(\chi(\bar{\alpha}))x=d_{\bar{\alpha}} x$.
\end{proof}

Note that $(\frac{1}{|F_n|_w}\sum_{\alpha\in F_n}d_\alpha\pi(\chi(\alpha)))^*=\frac{1}{|F_n|_w}\sum_{\alpha\in F_n}d_\alpha\pi(\chi(\bar{\alpha}))$. By Lemma~\ref{lm: bar}, we have $\frac{1}{|F_n|_w}\sum_{\alpha\in F_n}d_\alpha\pi(\chi(\bar{\alpha}))x=x$ for every $x$ in $H_\Sigma$. This implies that $T^*P=P$.

(ii) We first prove that
\begin{equation}\label{eq: met}
\lim_{n\to\infty}\frac{1}{|F_n|_w}\sum_{\alpha\in F_n}d_\alpha\pi(\chi(\alpha))(\pi(\chi(\gamma))y-d_\gamma y)=0
\end{equation}
 for all $y$ in $H$ and $\gamma$ in $\widehat{\mathbb{G}}_\Sigma$. The proof is a simplified version of the proof of~\cite[Thm. 3.1]{Huang2016-2}

For every $y\in H$ and $\gamma\in \widehat{G}$, we have

\begin{align*}
&\sum_{\alpha\in F_n}d_\alpha\pi(\chi(\alpha))(\pi(\chi(\gamma))y-d_\gamma y) \\
\tag{$\chi(\alpha)\chi(\gamma)=\chi(\alpha\gamma)$ and $d_\gamma=d_{\bar{\gamma}}$.}
&=\sum_{\alpha\in F_n}d_\alpha\pi(\chi(\alpha\gamma))y-\sum_{\alpha\in F_n}d_\alpha d_{\bar{\gamma}}\pi(\chi(\alpha))y   \\
&=\sum_{\alpha\in F_n}d_\alpha\sum_{\beta\in\widehat{\mathbb{G}}}N_{\alpha,\gamma}^\beta\pi(\chi(\beta))y-
\sum_{\alpha\in F_n}\sum_{\beta\in\widehat{\mathbb{G}}}N_{\alpha,\bar{\gamma}}^\beta d_\beta\pi(\chi(\alpha))y.
\end{align*}

Note that
$$\sum_{\alpha\in F_n}d_\alpha\sum_{\beta\in\widehat{\mathbb{G}}}N_{\alpha,\gamma}^\beta\pi(\chi(\beta))y=\sum_{\alpha\in F_n}d_\alpha(\sum_{\beta\in F_n}+\sum_{\beta\notin F_n})N_{\alpha,\gamma}^\beta\pi(\chi(\beta))y.$$
 Moreover
 \begin{align*}
 &\sum_{\alpha\in F_n}\sum_{\beta\in\widehat{\mathbb{G}}}N_{\alpha,\bar{\gamma}}^\beta d_\beta\pi(\chi(\alpha))y  \\
 &=\sum_{\alpha\in F_n}\sum_{\beta\in F_n}N_{\alpha,\bar{\gamma}}^\beta d_\beta\pi(\chi(\alpha))y+\sum_{\alpha\in F_n}\sum_{\beta\notin F_n}N_{\alpha,\bar{\gamma}}^\beta d_\beta\pi(\chi(\alpha))y \\
\tag{$N_{\alpha,\bar{\gamma}}^\beta=N_{\beta,\gamma}^\alpha$.}
&=\sum_{\alpha\in F_n}\sum_{\beta\in F_n}N_{\beta,\gamma}^\alpha d_\beta\pi(\chi(\alpha))y+\sum_{\alpha\in F_n}\sum_{\beta\notin F_n}N_{\alpha,\bar{\gamma}}^\beta d_\beta\pi(\chi(\alpha))y \\
&=\sum_{\alpha\in F_n}\sum_{\beta\in F_n}N_{\alpha,\gamma}^\beta d_\alpha\pi(\chi(\beta))y+\sum_{\alpha\in F_n}\sum_{\beta\notin F_n}N_{\alpha,\bar{\gamma}}^\beta d_\beta\pi(\chi(\alpha))y.
 \end{align*}
Hence
\begin{align*}
&\frac{1}{|F_n|_w}\sum_{\alpha\in F_n}d_\alpha\pi(\chi(\alpha))(\pi(\chi(\gamma))y-d_\gamma y) \\
&=\frac{1}{|F_n|_w}\sum_{\alpha\in F_n}\sum_{\beta\notin F_n}d_\alpha N_{\alpha,\gamma}^\beta\pi(\chi(\beta))y-\frac{1}{|F_n|_w}\sum_{\alpha\in F_n}\sum_{\beta\notin F_n}N_{\alpha,\bar{\gamma}}^\beta d_\beta\pi(\chi(\alpha))y.
\end{align*}

Note that
\begin{align*}
 &\|\frac{1}{|F_n|_w}\sum_{\alpha\in F_n}\sum_{\beta\notin F_n}d_\alpha N_{\alpha,\gamma}^\beta\pi(\chi(\beta))y\|   \\
 \tag{$\|\chi(\beta)\|\leq d_\beta$.}
 &\leq \frac{1}{|F_n|_w}\sum_{\alpha\in F_n}\sum_{\beta\notin F_n}d_\alpha N_{\alpha,\gamma}^\beta d_\beta\|y\|   \\
 &\leq \frac{1}{|F_n|_w}\sum_{\alpha\in \partial_\gamma F_n}d_\alpha^2 d_\gamma\|y\|\to 0
\end{align*}
as $n\to\infty$ since $\gamma$ is in $\widehat{\mathbb{G}}_\Sigma$.

Also
\begin{align*}
&\|\frac{1}{|F_n|_w}\sum_{\alpha\in F_n}\sum_{\beta\notin F_n}N_{\alpha,\bar{\gamma}}^\beta d_\beta\pi(\chi(\alpha))y\|   \\
&\leq \frac{1}{|F_n|_w}\sum_{\alpha\in F_n}\sum_{\beta\notin F_n}N_{\alpha,\bar{\gamma}}^\beta d_\beta d_\alpha\|y\|  \\
&\leq \frac{1}{|F_n|_w}\sum_{\alpha\in\partial_{\bar{\gamma}}F_n}d_\alpha^2d_{\bar{\gamma}}\|y\|\to 0
\end{align*}

as $n\to\infty$ since Theorem~\ref{thm: gp} guarantees that when $\gamma$ is in $\widehat{\mathbb{G}}_\Sigma$,  so is $\bar{\gamma}$.

Therefore
$$\lim_{n\to\infty}\frac{1}{|F_n|_w}\sum_{\alpha\in F_n}d_\alpha\pi(\chi(\alpha))(\pi(\chi(\gamma))y-d_\gamma y)=0.$$

If $y$ is in ${\rm Orb}(x,\widehat{\mathbb{G}}_\Sigma)$, then  there exist $\beta$ and $\gamma$ in $\widehat{\mathbb{G}}_\Sigma$ such that $\frac{\pi(\chi(\beta))}{d_\beta}x=\frac{\pi(\chi(\gamma))}{d_\gamma}y$. From Equation~\ref{eq: met}, we have
\begin{align*}
&\lim_{n\to\infty}\frac{1}{|F_n|_w}\sum_{\alpha\in F_n}d_\alpha\pi(\chi(\alpha))(x-y)  \\
\tag{$\frac{\pi(\chi(\beta))}{d_\beta}x=\frac{\pi(\chi(\gamma))}{d_\gamma}y$}
&=\lim_{n\to\infty}\frac{1}{|F_n|_w}\sum_{\alpha\in F_n}d_\alpha\pi(\chi(\alpha))[(x-\frac{\pi(\chi(\beta))}{d_\beta}x)+(\frac{\pi(\chi(\gamma))}{d_\gamma}y-y)]=0.
\end{align*}

\end{proof}

\begin{remark}
Theorem~\ref{thm: met}(ii) says that if  $x$ and $y$ are in the same orbit of $\widehat{\mathbb{G}}_\Sigma$, then averages of $x$ and $y$ along $\Sigma$ coincide.
\end{remark}

%\subsection{Kazhdan's Property (T) for Discrete Quantum Groups}\

%Several definitions of Kazhdan's property (T) for discrete quantum groups already appear in~\cite{Fima2010, Kyed2011}. We give a new definition and prove that it is equivalent to the previous ones.

\section*{Acknowledgements}
I thank Hanfeng Li for his illuminating comments. I got familiar with Furstenberg correspondence principle in a 2012 graduate student seminar  in SUNY at Buffalo organized by  Bingbing Liang, Yongle Jiang, Yongxiao Lin and myself. I thank them  for their kind feedback.   The latest version of the paper was carried out during a visit to the Research Center for Operator Algebras in East China Normal University in April 2017. I thank Huaxin Lin for his hospitality and Qin Wang for helpful discussions.


\begin{thebibliography}{999}

\small

\bibitem[BBF10]{BBF2010}
M. Beiglb\"ock, V. Bergelson and A. Fish.  Sunset phenomenon in countable amenable groups.  {\it Adv. Math.}  \textbf{ 223} (2010), no. 2, 416--432.



\bibitem[BL96]{BL1996}
V. Bergelson and A. Leibman.  Polynomial extensions of van der Waerden's and Szemer\'edi's theorems. {\it J. Amer. Math. Soc.} \textbf{ 9} (1996), 725--753.

\bibitem[BMT01]{BMT2001}
E. B\'edos, G. J. Murphy and L. Tuset. Co-amenability of compact quantum groups. {\it J. Geom. Phys.}  \textbf{ 40}  (2001),  no. 2, 130--153.

\bibitem[BS93]{BaajSkandalis1993}
S. Baaj and G. Skandalis. Unitaires multiplicatifs et dualit\'e pour les produits crois\'es de $C^*$-alg\`{e}bres. {\it Ann. Sci. \'Ecole Norm. Sup.} (4) \textbf{ 26} (1993), no. 4, 425--488.

\bibitem[DKSS16]{DKSS2016}
K. De Commer, P. Kasprzak, A.Skalski and P. M.Soltan. Quantum actions on discrete quantum spaces and a generalization of Clifford's theory of representations.  arXiv:1611.10341v1.

\bibitem[ET36]{ET1936}
P. Erd\"os and P. Tur\'an.  On some sequences of integers. {\it J.  Lond. Math. Soc.} \textbf{11} (1936), no. 4, 261--264.

%\bibitem[F10]{Fima2010}
%P. Fima. Kazhdan's property T for discrete quantum groups. {\it Internat. J. Math.} \textbf{21} (2010), no. 1, 47-–65.


\bibitem[Fur77]{Furstenberg1977}
 H. Furstenberg.  Ergodic behavior of diagonal measures and a theorem of Szemer\'edi on arithmetic progressions.  {\it J. Anal. Math.}  \textbf{ 31}, (1977), 204--256.

\bibitem[Fur81]{Furstenberg1981}
H. Furstenberg. {\it Recurrence in Ergodic theory and Combinatorial Number Theory}. Princeton University Press, Princeton NJ, 1981.

\bibitem[FK78]{FK1978}
H. Furstenberg and Y. Katznelson. An ergodic Szemer\'edi theorem for commuting transformations. {\it J. Anal. Math.} \textbf{ 34} (1978), 275--291.

\bibitem[FK91]{FK1991}
H. Furstenberg and Y. Katznelson.  A density version of the Hales-Jewett theorem. {\it J.  Anal. Math.} \textbf{ 57} (1991), 64--119.

\bibitem[FKO82]{FKO1982}
H. Furstenberg, Y.  Katznelson and D. S. Ornstein.  The ergodic theoretical proof of Szemer\'edi's theorem. {\it Bull. Amer. Math. Soc.} \textbf{ 7} (1982), no. 3, 527--552.


\bibitem[GT08]{GreenTao2008}
B. Green and T. Tao.  The primes contain arbitrarily long arithmetic progressions. {\it Ann. of Math.} \textbf{ 167} (2008), no. 2, 481--547.

\bibitem[Hua16-1]{Huang2016-1}
H. Huang. Invariant subsets under compact quantum group actions. {\it J. Noncommut. Geom.} \textbf{ 10} (2) (2016), 447--469.

\bibitem[Hua16-2]{Huang2016-2}
H. Huang. Mean ergodic theorem for amenable discrete quantum groups and a Wiener-type theorem for compact metrizable groups. {\it Anal. PDE} \textbf{ 9} (2016), no. 4, 893--906.


\bibitem[Kye08]{Kyed2008}
D. Kyed.  $L^2$-Betti numbers of coamenable quantum groups. {\it M\"{u}nster J. Math. } \textbf{ 1} (2008), 143–-179.

%\bibitem[Kye11]{Kyed2011}
%D. Kyed.  A cohomological description of property (T) for quantum groups. {\it J. Funct. Anal.} {\bf 261} (2011), no. 6, 1469-–1493.

\bibitem[KV99]{KustermansVaes1999}
J. Kustermans and S.  Vaes. A simple definition for locally compact quantum groups. {\it C. R. Acad. Sci. Paris S\'er. I Math.} \textbf{ 328} (1999), no. 10, 871--876.

\bibitem[KV00]{KustermansVaes2000}
J. Kustermans and S.  Vaes. Locally compact quantum groups. {\it Ann. Sci. \'Ecole Norm. Sup.} (4) \textbf{ 33} (2000), no. 6, 837--934.


\bibitem[MvD98]{MaesVanDaele1998}
M. Maes and A. Van Daele.  Notes on compact quantum groups. {\it Nieuw Arch. Wisk.} (4) \textbf{16} (1998), no. 1-2, 73--112.

\bibitem[PW90]{PodlesWoronowicz1990}
P. Podle\'{s}, P and S. L. Woronowicz. Quantum deformation of Lorentz group. {\it Comm. Math. Phys.} \textbf{ 130} (1990), no. 2, 381--431.

\bibitem[Sai09]{Sain2009}
J. N. Sain. Berezin quantization from ergodic actions of compact quantum groups, and quantum Gromov-Hausdorff distance. Thesis (Ph.D.)–University of California, Berkeley. 2009.

\bibitem[Sze69]{Szemeredi1969}
E. Szemer\'edi. On sets of integers containing no four elements in arithmetic progression. {\it Acta Math. Acad. Sci. Hungar.} \textbf{ 20} (1969), 89--104.


\bibitem[Sze75]{Szemeredi1975}
E. Szemer\'edi. On sets of integers containing no $k$ elements in arithmetic progression. {\it Acta Arith.} \textbf{ 27}, (1975), 199--245.

\bibitem[SW07]{SoltanWoronowicz2007}
P. M. So{\l}tan and S. L. Woronowicz. From multiplicative unitaries to quantum groups. II. {\it J. Funct. Anal.} {\bf 252} (2007), no. 1, 42–-67.

\bibitem[Tao07]{Tao2007}
T. Tao.  What is good mathematics? {\it Bull. Amer. Math. Soc. } \textbf{ 44} (2007), no. 4, 623--634.

\bibitem[vanD96]{Vandaele1996}
A. Van Daele. Discrete quantum groups. {\it J. Algebra} \textbf{ 180} (1996), no. 2, 431--444.

\bibitem[vand27]{vanderWaerden1927}
B. L. van der Waerden. Beweis einer Baudetschen Vermutung. {\it Nieuw. Arch. Wisk.} \textbf{15} (1927), 212--216.

\bibitem[Wal82]{Walters1982}
P. Walters. {\it An Introduction to Ergodic Theory.} Graduate Texts in Mathematics, \textbf{ 79}. Springer-Verlag, New York-Berlin, 1982.

\bibitem[Wor87]{Woronowicz1987}
S. L. Woronowicz. Compact matrix pseudogroups. {\it Comm. Math. Phys.} \textbf{ 111} (1987), no. 4, 613--665.

\bibitem[Wor98]{Woronowicz1998}
S. L. Woronowicz. Compact quantum groups. {\it Sym\'etries Quantiques (Les Houches, 1995)},  845--884, North-Holland, Amsterdam, 1998.

\end{thebibliography}
\end{document}